\title{The additivity of traces in stable $\infty$-categories}
\author{Maxime Ramzi }
\email{maxime.ramzi@math.ku.dk}
\thanks{I was supported financially by École Normale Supérieure under the status of ``fonctionnaire stagiaire'', and was hosted by the GeoTop center at K\o benhavns Universitet}
\newtheorem{thm}{Theorem}[section]
\newtheorem{lm}[thm]{Lemma}
\newtheorem{prop}[thm]{Proposition}
\newtheorem{cor}[thm]{Corollary}
\newtheorem*{thm*}{Theorem}
\theoremstyle{definition}
\newtheorem{defn}[thm]{Definition}
\newtheorem{cons}[thm]{Construction}
\newtheorem{nota}[thm]{Notation}
\newtheorem{rmk}[thm]{Remark}
\newtheorem{obs}[thm]{Observation}
\newcommand{\op}{^{\mathrm{op}}}
\newcommand{\cat}{\mathbf}
\newcommand{\on}{\operatorname}
\newcommand{\id}{\mathrm{id}}
\newcommand{\Fun}{\on{Fun}}
\newcommand{\map}{\on{map}}
\newcommand{\Map}{\on{Map}}
\newcommand{\Sph}{\mathbb S}
\newcommand{\Sp}{\cat{Sp}}
\newcommand{\PrL}{\cat{Pr}^L}
\newcommand{\CAlg}{\mathrm{CAlg}}
\newcommand{\HH}{\mathrm{HH}}
\newcommand{\THH}{\mathrm{THH}}
\newcommand{\Perf}{\mathbf{Perf}}
\newcommand{\Ind}{\mathrm{Ind}}
\newcommand{\Endo}{\mathrm{End}}
\newcommand{\C}{\cat C}
\newcommand{\un}{\mathbf 1}
\newcommand{\tr}{\mathrm{tr}}
\begin{document}
\begin{abstract}
    We prove a version of J.P. May's theorem on the additivity of traces, in symmetric monoidal stable $\infty$-categories. Our proof proceeds via a categorification, namely we use the additivity of topological Hochschild homology as an invariant of stable $\infty$-categories and construct a morphism of spectra $\THH(\C)\to \Endo(\un_\C)$ for $\C$ a stably symmetric monoidal rigid $\infty$-category. We also explain how to get a more general statement involving traces of finite (homotopy) colimits. 
\end{abstract}
\maketitle

\section*{Introduction}
Consider an endomorphism $f$ of a finite dimensional vector space $V$. 

One way to compute the trace of $f$ is to find a sub-vector space stable under $f$, and use the following fact: 
\begin{lm}
Suppose we are given a short exact sequence of endomorphisms of finite dimensional vector spaces:  $$\xymatrix{0\ar[r] \ar[d] & W\ar[r] \ar[d]^f & V \ar[r]\ar[d]^g & Q\ar[d]^h \ar[r] & 0\ar[d]\\ 0\ar[r]&
W\ar[r] & V\ar[r] & Q\ar[r] & 0}$$ 
Then $\tr(g) = \tr(f)+\tr(h)$. 
\end{lm}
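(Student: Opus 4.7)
The plan is to reduce the statement to a basis-level computation, exploiting the fact that every short exact sequence of vector spaces splits.

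First I would choose a linear section $s: Q \to V$ of the surjection $V \to Q$, giving a direct sum decomposition $V \cong W \oplus Q$. With respect to this decomposition, the restriction of $g$ to $W$ agrees with $f$ (by the left square of the diagram), and the endomorphism induced by $g$ on the quotient $Q$ is $h$ (by the right square), so $g$ takes the block upper-triangular form
$$g = \begin{pmatrix} f & \varphi \\ 0 & h \end{pmatrix}$$
for some linear map $\varphi: Q \to W$ measuring the failure of $s$ to be $g$-equivariant.

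Next I would pick bases of $W$ and $Q$ and concatenate them into a basis of $V$. In this basis the matrix of $g$ is block upper-triangular with the matrices of $f$ and $h$ on the diagonal blocks, and the off-diagonal block $\varphi$ contributes nothing to the sum of diagonal entries. Since the trace of a block upper-triangular matrix is the sum of the traces of its diagonal blocks, we conclude $\tr(g) = \tr(f) + \tr(h)$.

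There is no substantive obstacle here: the argument is purely linear-algebraic and works precisely because every short exact sequence of vector spaces splits. The author presumably includes this lemma as motivation for the more delicate additivity statements developed later, where short exact sequences are replaced by cofibre sequences in a stable $\infty$-category and the absence of a canonical splitting is what turns the analogue into a genuine theorem.
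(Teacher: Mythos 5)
Your proposal is correct and follows exactly the paper's intended argument: pick a splitting (equivalently, a suitable basis of $V$ extending a basis of $W$ by lifts of a basis of $Q$), observe that $g$ becomes block upper-triangular with $f$ and $h$ on the diagonal, and take the trace. The paper gives the same one-line sketch with the block matrix $\begin{pmatrix}M & * \\ 0 & N\end{pmatrix}$.
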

The proof is quite simple, as given an appropriate basis for $V$, the matrix of $g$ looks like a block matrix $\begin{pmatrix}M & * \\0 & N\end{pmatrix}$ where $M$ is a matrix for $f$ in $W$, and $N$ a matrix for $h$ in $Q$. 

This proof, however, does not extend to other contexts where traces are nonetheless defined, e.g. over finite spectra (where traces are related to Lefschetz numbers and Euler characteristics, see \cite{DoldPup}). 

In \cite{MayAdd}, J.P. May gives a proof of an analogous statement in a suitable tensor triangulated category (such as the finite stable homotopy category). The proof relies on several diagram chases and has a somewhat technical flavour. 
The same is true for another proof given by M. Groth, K. Ponto and M. Shulman in \cite{GPS} in the setting of derivators. 

The purpose of this short note is to provide a different proof of the analogous statement for stable symmetric monoidal $\infty$-categories based on a categorification of the problem. Our main theorem is : 
 
\begin{thm*}
Let $\C$ be a stably symmetric monoidal $\infty$-category with unit $\un$, and let 
$$\xymatrix{X\ar[r] \ar[d]^f & Y \ar[r]\ar[d]^g & Z\ar[d]^h \\ 
X\ar[r] & Y\ar[r] & Z}$$ 
denote a co/fiber sequence of endomorphisms of dualizable objects.

Then, in $\pi_0\Endo(\un)$, $\tr(g) = \tr(f)+\tr(h)$. 
\end{thm*}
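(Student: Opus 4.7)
The plan is to categorify the problem: rather than chasing diagrams in $\C$, I construct a morphism of spectra $\tr_\C : \THH(\C^{\mathrm{dual}}) \to \Endo(\un)$ and reduce the statement to the additivity of $\THH$ as an invariant of stable $\infty$-categories. Throughout I replace $\C$ by its full subcategory $\C^{\mathrm{dual}}$ of dualizable objects, which is stably symmetric monoidal rigid, and in which the entire cofiber sequence $(f,g,h)$ lives.

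The first and most delicate step is to construct $\tr_\C$. Conceptually, $\THH(\mathbf{D})$ is the categorical trace of $\id_\mathbf{D}$ in a suitable $(\infty,2)$-category of stable $\infty$-categories; for $\mathbf{D}$ rigid symmetric monoidal one should be able to exploit the tensor product and duality on $\mathbf{D}$ itself to produce a canonical factorization of this trace through $\Endo(\un)$. At the level of $\pi_0$ I need this map to send the class $[f]$ of an endomorphism $f : X \to X$, defined via the standard cyclic edge in $\THH(\C^{\mathrm{dual}})$, to the categorical trace $\tr(f) \in \pi_0 \Endo(\un)$. Verifying this identification is where the symmetric monoidal structure and the standard presentation of $\tr(f)$ via the evaluation and coevaluation of $X$ come into play.

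With $\tr_\C$ in hand, the remainder of the argument is essentially formal. A cofiber sequence of endomorphisms $(f,g,h)$ on $(X,Y,Z)$ defines a point in the $\THH$ of the stable $\infty$-category of cofiber sequences in $\C^{\mathrm{dual}}$, which I denote $\Fun(\Delta^1, \C^{\mathrm{dual}})^{\mathrm{cof}}$. The additivity theorem for $\THH$ on stable $\infty$-categories --- in the form due to Blumberg--Mandell and its $\infty$-categorical refinements --- asserts that the two exact functors extracting the sub- and quotient-object induce an equivalence
\[
\THH(\Fun(\Delta^1,\C^{\mathrm{dual}})^{\mathrm{cof}}) \simeq \THH(\C^{\mathrm{dual}})^{\oplus 2},
\]
under which the ``middle term'' functor corresponds to the sum. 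It follows that $[g] = [f] + [h]$ in $\pi_0 \THH(\C^{\mathrm{dual}})$, and applying $\tr_\C$ yields the desired equality $\tr(g) = \tr(f) + \tr(h)$ in $\pi_0 \Endo(\un)$.

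I expect the main obstacle to lie entirely in the first step: constructing $\tr_\C$ as a morphism of spectra from the rigid symmetric monoidal structure on $\C^{\mathrm{dual}}$ together with the universal properties of $\THH$, and then matching its value on $[f]$ with the classical trace. Once this is in place, the additivity theorem for $\THH$ is a sufficiently powerful input that the additivity of traces falls out by direct application, without any further diagram chases.
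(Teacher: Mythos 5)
Your overall strategy coincides with the paper's: pass to the subcategory of dualizable objects, construct a trace morphism $\THH(\C^{\mathrm{dual}})\to\Endo(\un)$ that realizes the ordinary trace on $\pi_0$, and deduce the theorem from additivity of $\THH$. The second half of your argument is fine; your use of Waldhausen-style additivity via the stable category of cofiber sequences is an equivalent packaging of the paper's version, which instead represents an endomorphism by an exact functor out of $\Perf(\Sph[t])$ (so that a cofiber sequence of endomorphisms becomes a cofiber sequence of exact functors) and invokes additivity of $\THH$ in the Blumberg--Gepner--Tabuada sense. Neither formulation is preferable, and this part of your proposal would go through once the first step is in place.

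The genuine gap is that the first step --- the only part with mathematical content --- is not carried out; you acknowledge this yourself. Saying that one ``should be able to exploit the tensor product and duality'' to factor the categorical trace through $\Endo(\un)$ is a direction, not a construction, and it is not automatic. What the paper actually does: $\THH(\C)$ is the trace of $\id$ on $\Ind(\C)$, which is dualizable in $\PrL_{st}$; for rigid $\C$ the duality data on $\Ind(\C)$ can be written with evaluation $\Map(\un,-)\circ\mu$ and coevaluation $\mu^*\circ\eta$, where $\mu:\Ind(\C)\otimes\Ind(\C)\to\Ind(\C)$ is the multiplication and $\mu^*$ its right adjoint. The coevaluation of $\Sph$ is identified with the image $\tilde{\id}$ of $\id_{\Ind(\C)}$ under the duality equivalence, giving $\THH(\C)\simeq\Map(\un,\mu\mu^*\un)$, and the counit $\mu\mu^*\un\to\un$ supplies the morphism to $\Endo(\un)$. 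One must then separately identify the canonical map $\Map(x,x)\to\THH(\C)$ as the trace of the natural transformation $\Map(x,-)\boxtimes x\to\id$, match this under duality with the evaluation $Dx\otimes x\to\un$, and chase through the spectral Yoneda lemma to see that the composite to $\Endo(\un)$ is the usual trace on $\pi_0$. None of these identifications is a formality, and until they are done the proof is incomplete. The strategy is the right one, but the proposal currently defers precisely the step on which everything depends.
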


A similar proof appeared for the special case of Euler characteristics in \cite{HSS}. In our presentation we replace algebraic $K$-theory with topological Hochschild homology ($\THH$) to obtain the more general statement.

We also explain how to get a more general statement involving finite (homotopy) colimits, based on \cite{berman}.

\subsection{Outline of the paper}
In section \ref{section : strat}, we explain our general proof strategy, which reduces to constructing a morphism $\THH(\C)\to \Endo(\un)$ with nice properties. 

In section \ref{section : map}, we explain how to construct the morphism in question and prove that it satisfies the desired properties.

In section \ref{section : rig}, we mention a slight generalization of the construction from section \ref{section : map}, which is not necessary for our main theorem but might be of interest for future applications. 

Finally, section \ref{section : mobius} is where we state and prove a slightly more general version of the additivity theorem, for finite homotopy colimits. 
\section*{Conventions}
We use the framework of $\infty$-categories as developped in \cite{HTT} and \cite{HA}. We use the word ``category'' for what is called $\infty$-category in these books, and all our constructions such as co/limits should be interpreted in $\infty$-categories.

We use the expression stably symmetric monoidal category to mean a symmetric monoidal category whose underlying category is stable, and such that the tensor product is exact in each variable. 

$\PrL$ is the category of presentable categories, and $\PrL_{st}$ of presentable stable categories. 

We use $\otimes$ to denote several different things: the smash product of spectra, the Lurie tensor product on $\PrL$, the category of presentable categories and left adjoint functors, the one on stable categories and exact functors, the canonical $\Sp$-tensor on a stable presentable category and the tensor product on our stable category of interest. Which one of these we mean should be clear from context. 

We use without comment the fact that a stable category is canonically enriched in spectra, and let $\Map$ denote the mapping spectrum functor and we use $\map$ for mapping spaces (with similar conventions for $\Endo$ and $\mathrm{end}$).

$\Fun^{ex}$ denotes the category of exact functors, $\Fun^L$ the category of left adjoint functors; and finally $\Sph$ denotes the sphere spectrum.
\section*{Acknowledgements}
I would like to thank Marc Hoyois for helpful discussions relating to this note, and for suggesting that I add the material of section 3. 

I am also grateful to Guillaume Laplante-Anfossi for helpful comments on a first draft.
\section{Strategy of proof of the main theorem}\label{section : strat}
\hspace{\parindent}Suppose $\C$ is a small stably symmetric monoidal category with all objects dualizable. Let $\un$ denote its unit. We will construct in section \ref{section : map} a morphism from the topological Hochschild homology of $\C$ to the endomorphism spectrum of its unit, $\THH(\C)\to \Endo(\un)$ with the property that the composite $\pi_0\map(x,x)\cong \pi_0\Map(x,x)\to \pi_0\THH(\C)\to \pi_0\Endo(\un)$ is the function that sends an endomorphism of $x$ to its trace, for any $x\in\C$.

\begin{rmk}\label{rmk : alldual}
The assumption that all objects of $\C$ are dualizable is not essential to the additivity theorem, but it is essential to the construction of the morphism $$\THH(\cat C)\to \Endo(\un)$$ 
Indeed, because the additivity theorem only concerns dualizable objects, one can safely replace $\C$ by its full subcategory of dualizable objects, which is a stable subcategory closed under tensor products. 
\end{rmk}
We now explain how to deduce the additivity theorem from the existence of such a morphism.

Topological Hochschild homology is an additive invariant in the sense of \cite{BGT}, in particular for exact functors $F,G,H : \cat D\to\C$ lying in a co/fiber sequence $F\to G\to H$, the induced maps satisfy $\THH(G)\simeq \THH(F)+\THH(H)$ as maps $\THH(\cat D)\to \THH(\C)$. \begin{rmk}
There is a proof in \cite{BM} that $\THH$ is additive, but it is also the content of \cite[Section 3]{HSS}, where a relatively elementary proof appears. By \cite[§4.5]{HSS}, the definitions of $\THH$ given in these two papers agree.
\end{rmk}
Recall further that evaluation along the universal endomorphism $\Sph[t]\to \Sph[t]$ induces an equivalence $$\Fun^{ex}(\Perf(\Sph[t]),\C)\to \Fun(\Delta^1/\partial \Delta^1, \C)$$\label{eq : repend}
see e.g. \cite[Proposition 3.8]{KEnd}.

In particular, a co/fiber sequence of endomorphisms $$\xymatrix{X\ar[r] \ar[d]^f & Y \ar[r]\ar[d]^g & Z\ar[d]^h \\ 
X\ar[r] & Y\ar[r] & Z}$$ induces a co/fiber sequence of exact functors $\Perf(\Sph[t])\to \C$ representing these endomorphisms (call them $F,G,H$ respectively). 

Additivity of $\THH$ implies that $\THH(G)\simeq\THH(F)+\THH(H)$, and we have the following commutative diagram, for any exact functor $R : \Perf(\Sph[t])\to\C$: $$\xymatrix{\map(\Sph[t],\Sph[t]) \ar[r] \ar[d] & \map(R(\Sph[t]),R(\Sph[t])) \ar[d] \\ \THH(\Sph[t]) \ar[r] & \THH(\C)}$$

Denoting by $\tilde{f}$ the image in $\pi_0\THH(\C)$ of $f\in \pi_0\map(X,X)$ it follows that $\tilde{g} = \tilde{f}+\tilde{h}$. 

Composing with the morphism $\pi_0\THH(\C)\to\pi_0\Endo(\un)$, using the fact that the composition maps $f$ to its trace, we find that $\tr(g) = \tr(f)+\tr(h)$, which is the additivity theorem, in the following form (see remark \ref{rmk : alldual}): 
\begin{thm}\label{thm : truemain}
Let $\C$ be a stably symmetric monoidal category with unit $\un$, and let 
$$\xymatrix{X\ar[r] \ar[d]^f & Y \ar[r]\ar[d]^g & Z\ar[d]^h \\ 
X\ar[r] & Y\ar[r] & Z}$$ 
denote a co/fiber sequence of endomorphisms of dualizable objects.

Then, in $\pi_0\Endo(\un)$, $\tr(g) = \tr(f)+\tr(h)$. 
\end{thm}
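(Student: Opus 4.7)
The proof plan follows the strategy already sketched in this section. The structural input is the morphism $\THH(\C)\to \Endo(\un)$ whose construction, together with the fact that the composite $\pi_0\map(x,x)\to \pi_0\THH(\C)\to \pi_0\Endo(\un)$ computes the trace, will be carried out in section \ref{section : map}. Given this map, the additivity theorem is a formal consequence of the additivity of $\THH$ as an invariant of stable categories.

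First I would reduce to the case where $\C$ is small and all of its objects are dualizable. The co/fiber sequence involves only finitely many dualizable objects, and by Remark \ref{rmk : alldual} the full subcategory of dualizable objects is a stable subcategory closed under tensor products; passing to the smallest such subcategory containing $X$, $Y$, $Z$ does not affect the trace computed in $\pi_0\Endo(\un)$, and puts us in the setting where the map $\THH(\C)\to \Endo(\un)$ is defined.

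Next I would use the equivalence $\Fun^{ex}(\Perf(\Sph[t]),\C)\simeq \Fun(\Delta^1/\partial\Delta^1,\C)$ to lift the co/fiber sequence of endomorphisms to a co/fiber sequence of exact functors $F\to G\to H\colon \Perf(\Sph[t])\to\C$ representing $f$, $g$, $h$. Additivity of $\THH$ then yields $\THH(G)\simeq \THH(F)+\THH(H)$ as maps $\THH(\Sph[t])\to \THH(\C)$. Tracking the class of $\mathrm{id}_{\Sph[t]}\in \pi_0\map(\Sph[t],\Sph[t])$ through the naturality square displayed above produces the identity $\tilde{g}=\tilde{f}+\tilde{h}$ in $\pi_0\THH(\C)$, where $\tilde{(-)}$ denotes the image of $(-)\in \pi_0\map(X,X)$ in $\pi_0\THH(\C)$.

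Finally, applying $\pi_0\THH(\C)\to \pi_0\Endo(\un)$ and using the trace formula for the composite, I would conclude $\tr(g)=\tr(f)+\tr(h)$. The formal part of the argument is therefore essentially bookkeeping; the main obstacle is the construction of the spectrum map $\THH(\C)\to \Endo(\un)$ together with its trace property, which requires essentially using the rigidity of $\C$ and is the content of section \ref{section : map}.
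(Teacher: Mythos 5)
Your proposal follows the paper's own strategy exactly: reduce to a small stably symmetric monoidal category with all objects dualizable, lift the co/fiber sequence of endomorphisms to a co/fiber sequence of exact functors $F\to G\to H\colon \Perf(\Sph[t])\to\C$ via the equivalence $\Fun^{ex}(\Perf(\Sph[t]),\C)\simeq \Fun(\Delta^1/\partial\Delta^1,\C)$, invoke additivity of $\THH$, and then apply the map $\THH(\C)\to\Endo(\un)$ constructed in section~\ref{section : map}. That is the paper's argument.

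However, there is one substantive slip in the tracking step. You push the class of $\mathrm{id}_{\Sph[t]}\in\pi_0\map(\Sph[t],\Sph[t])$ through the naturality square and claim this produces $\tilde g = \tilde f + \tilde h$. It does not: the equivalence $\Fun^{ex}(\Perf(\Sph[t]),\C)\simeq\Fun(\Delta^1/\partial\Delta^1,\C)$ is evaluation at the \emph{universal endomorphism} $t\colon\Sph[t]\to\Sph[t]$, so the representing functor $F$ sends $t$ to $f$ and sends $\mathrm{id}_{\Sph[t]}$ to $\mathrm{id}_X$. Tracking the identity therefore only recovers $\widetilde{\mathrm{id}_Y}=\widetilde{\mathrm{id}_X}+\widetilde{\mathrm{id}_Z}$, i.e.\ the Euler characteristic additivity, not the full trace statement. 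The correct element to track is $t\in\pi_0\map(\Sph[t],\Sph[t])$, whose images under $F,G,H$ are $f,g,h$; then the same argument yields $\tilde g=\tilde f+\tilde h$ in $\pi_0\THH(\C)$ and the conclusion follows. (There is also a cosmetic imprecision: $\tilde g$ should denote the image of $g\in\pi_0\map(Y,Y)$ and $\tilde h$ of $h\in\pi_0\map(Z,Z)$, not all in $\pi_0\map(X,X)$; and in the reduction step the small subcategory you pass to should explicitly be taken to contain $\un$ and be closed under duals, so that the objects remain dualizable internally.)
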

We will write down the complete proof at the end of section \ref{section : map}. 
\begin{rmk}
The notation above is a bit abusive : a cofiber sequence in a stable category is not only a sequence $X\to Y\to Z$, but it is the data of a commutative square $$\xymatrix{X\ar[r]\ar[d]& Y \ar[d] \\0\ar[r] & Z}$$
In particular it comes equipped with a specified nullhomotopy of the composite $X\to Z$. A co/fiber sequence of endomorphisms (or an endomorphism of a co/fiber sequence) has to take this into account, so that the non-abusive way to write such a diagram would involve a whole cube rather than the ladder we drew. 

If $f,g$ are given, the condition for $h:Z\to Z$ to lie in such a co/fiber sequence is not only that the right hand square commute. This explains why in \cite{MayAdd}, the statement looks like ``for any $f,g$, there exists an $h$ such that''.

Up to homotopy, $h$ depends on the specific chosen homotopy witnessing the commutativity of the left hand diagram - any choice of such a homotopy yields possibly different $h$'s as their cofiber. For instance, some choices of the left hand square have some $h$'s nullhomotopic, and some $h$'s \emph{not} nullhomotopic  

The statement in \cite{GPS} is essentially the same as ours, in the setting of stable monoidal derivators, though their proof is rather different. 
\end{rmk}
\begin{rmk}
See \cite{counterex} for a counter-example to the naive statement that an endomorphism of the diagram $X\to Y\to Z$ has the same additivity property. The theorem really is about endomorphisms of co/fiber sequences. 

In a letter to Thomason, Grothendieck explains how this failure of additivity of traces in the bare derived $1$-category was one of his motivations for the development of the formalism of derivators, where mapping cones would be functorial. Stable $\infty$-categories are another formalism in which there is such a functoriality, and this is why they can also be a home for this theorem. 
\end{rmk}
Using the special case of a co/fiber sequence of identity maps, and letting $\chi$ denote the Euler characteristic (i.e. the trace of the identity map), we deduce: 
\begin{cor}
With the same assumptions as in the previous theorem, if $X,Y,Z$ are dualizable and there is a cofiber sequence $X\to Y\to Z$, then $\chi(Y) = \chi(X)+\chi(Z)$. 
\end{cor}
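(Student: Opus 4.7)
The plan is to deduce the corollary by specializing Theorem \ref{thm : truemain} to identity endomorphisms. Given a cofiber sequence $X\to Y\to Z$ of dualizable objects, I would consider the identity natural transformation on this entire cofiber sequence. Viewing the cofiber sequence as a functor $\Delta^1\times\Delta^1\to\C$ (the pushout square with a nullhomotopy of $X\to Z$), its identity self-map restricts pointwise to $\id_X$, $\id_Y$, $\id_Z$, and thus exhibits a candidate ``co/fiber sequence of endomorphisms'' $(\id_X,\id_Y,\id_Z)$.

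Next, I would verify that this really is a co/fiber sequence of endomorphisms in the sense required by Theorem \ref{thm : truemain}. Using the representability equivalence $\Fun^{ex}(\Perf(\Sph[t]),\C)\simeq \Fun(\Delta^1/\partial\Delta^1,\C)$, the three identity endomorphisms correspond to three exact functors $F,G,H:\Perf(\Sph[t])\to\C$; the point is that these fit into a cofiber sequence $F\to G\to H$ in $\Fun^{ex}(\Perf(\Sph[t]),\C)$. This is essentially automatic since cofibers in a functor category are computed pointwise, so the original cofiber sequence $X\to Y\to Z$ (with the identity self-map across the top) already furnishes the required pointwise cofiber data, and the specified nullhomotopy of $X\to Z$ provides the required corner data of the cube.

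Having verified this, Theorem \ref{thm : truemain} immediately yields $\tr(\id_Y)=\tr(\id_X)+\tr(\id_Z)$ in $\pi_0\Endo(\un)$. By definition of the Euler characteristic, $\chi(W)=\tr(\id_W)$, so this rewrites as $\chi(Y)=\chi(X)+\chi(Z)$.

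The only genuinely subtle point, and hence the main obstacle, is the verification in the second step: while intuitively ``obvious'', one must be careful that the identity on a cofiber sequence is legitimately promoted to a cofiber sequence in the category of endomorphisms, which as noted in the remark preceding the corollary is really cube data rather than ladder data. Once one commits to the functorial viewpoint via $\Fun^{ex}(\Perf(\Sph[t]),\C)$ and invokes pointwise computation of cofibers, this concern dissolves and the corollary reduces to a direct instance of the theorem.
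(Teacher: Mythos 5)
Your proposal is correct and follows the same route as the paper: the paper deduces the corollary in one line by specializing Theorem \ref{thm : truemain} to the co/fiber sequence of identity endomorphisms, and you do the same, merely spelling out (correctly, via the functorial viewpoint $\Fun^{ex}(\Perf(\Sph[t]),\C)\simeq\Fun(\Delta^1/\partial\Delta^1,\C)$ and pointwise cofibers, or equivalently via exactness of the ``constant endomorphism'' functor $\C\to\Fun(\Delta^1/\partial\Delta^1,\C)$) why $(\id_X,\id_Y,\id_Z)$ genuinely forms a co/fiber sequence of endomorphisms.
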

\begin{rmk}
As pointed out in \cite[Remark 6.6]{HSS}, this corollary can be stated in an even stronger way, as the existence of an $S^1$-equivariant trace map $\mathbb K(\C)\to \THH(\C)$, where $\mathbb K$ denotes nonconnective algebraic $K$-theory. 
\end{rmk}
\section{Construction of the morphism}\label{section : map}
In this section we explain how to define the map $\THH(\C)\to \Endo(\un)$, in other words we prove:
\begin{thm}\label{thm : main}
Suppose $\C$ is a small stably symmetric monoidal category where all objects are dualizable, with unit $\un$. Then there exists a map of spectra $$\THH(\C)\to \Endo(\un)$$ such that the composite with the canonical map $\map(x,x)\to \THH(\C)\to \Endo(\un)$ realizes the trace on $\pi_0$, for all $x\in\C$.
\end{thm}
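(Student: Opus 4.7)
The plan is to construct the map by exploiting the coend description of $\THH(\C)$ and assembling the individual categorical traces on dualizable objects into a cyclically coherent morphism.

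Recall that $\THH(\C)$ is the geometric realization of the cyclic bar construction of $\C$ viewed as spectrally enriched, equivalently the homotopy coend $\int^{x\in\C}\map(x,x)$. So to produce the desired map it suffices to give, for each dualizable $x\in\C$ (which is every $x\in\C$ by hypothesis), a map $\tr_x : \map(x,x)\to\Endo(\un)$, cyclically coherent in the $\Sp$-enriched sense. The natural candidate is the spectrum-level realization of the classical categorical trace:
$$\tr_x(f) = \bigl(\un \xrightarrow{c_x} x\otimes x^\vee \xrightarrow{f\otimes\id} x\otimes x^\vee \xrightarrow{\sigma} x^\vee\otimes x \xrightarrow{e_x} \un\bigr),$$
using the coevaluation $c_x$, the evaluation $e_x$, and the symmetry $\sigma$. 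The identification of this composite with the classical trace at the level of $\pi_0$ is immediate from the formula, so the required trace property will hold once the assembly goes through.

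The cyclic coherences---the homotopy $\tr_x(gf)\simeq\tr_y(fg)$ for $f : x\to y$ and $g : y\to x$, together with higher compatibilities---follow from formal manipulations of the symmetry and duality data. The cleanest way to package them in the $\infty$-categorical setting is to identify $\THH(\C)$ with the categorical trace of $\id_{\Ind(\C)}$ in $\PrL_{st}$, which is available because $\Ind(\C)$ is dualizable (indeed rigid) by the hypothesis on $\C$. The symmetric monoidal structure then provides an adjunction $u : \Sp\rightleftarrows\Ind(\C) : \Gamma = \map(\un,-)$ (the right adjoint $\Gamma$ itself preserving colimits because $\un$ is compact), and the $2$-functoriality of the categorical trace with respect to this adjunction yields a canonical map into $\Endo(\un) = \Gamma(\un)$.

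The main obstacle is the setup of this $2$-functorial trace formalism at the $\infty$-categorical level and the identification of the resulting map with the explicit trace formula above. I expect this to be handled either by working directly in an appropriate $(\infty,2)$-categorical model where adjunctions between dualizable objects induce morphisms of traces, or by a more hands-on construction from the coend description, verifying cyclic coherence via direct manipulations of the duality data.
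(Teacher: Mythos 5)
Your high-level strategy---identifying $\THH(\C)$ with the categorical trace of $\id_{\Ind(\C)}$ in $\PrL_{st}$ and building the map from there---matches the paper, and the explicit pointwise formula $e_x\circ\sigma\circ(f\otimes\id)\circ c_x$ is the right target on $\pi_0$. But the actual construction of $\THH(\C)\to\Endo(\un)$ is left to an appeal to ``$2$-functoriality of the categorical trace with respect to the adjunction $u\dashv\Gamma=\Map(\un,-)$'', and that is where the proposal has a genuine gap. In the $2$-functoriality formalism for traces (as in \cite{HSS}), the morphism along which one transports a trace must itself admit a colimit-preserving right adjoint in $\PrL_{st}$, and the right adjoint of $\Gamma = \Map(\un,-)$ is in general \emph{not} colimit-preserving. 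Concretely, for $\Ind(\C) = \Mod_{\mathbb F_p}$, which falls squarely under the hypotheses, $\Gamma$ is the forgetful functor to $\Sp$ and its right adjoint is $\Hom_{\Sph}(\mathbb F_p,-)$, which fails to preserve colimits since $\mathbb F_p$ is not a compact spectrum. The adjunction the hypothesis actually supplies, via rigidity of $\Ind(\C)$, is $\mu\dashv\mu^*$ with $\mu^*$ colimit-preserving; the rigidity duality data identify the evaluation and coevaluation with $\Map(\un,-)\circ\mu$ and $\mu^*\circ\eta$ respectively, giving $\THH(\C)\simeq\Map(\un,\mu\mu^*\un)$, and the desired map comes from applying $\Map(\un,-)$ to the \emph{counit} $\mu\mu^*\un\to\un$. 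The adjunction $u\dashv\Gamma$ does appear, but only as the outer bookends of the trace composite, not as the source of the needed counit.

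Separately, the trace property on $\pi_0$ is not automatic from the structural assembly you describe, since that construction does not hand you the pointwise trace formula directly. The paper fixes a specific model for the map $\map(x,x)\to\THH(\C)$, namely $\THH(\C,-)$ applied to $\mathrm{can}:\Map(x,-)\boxtimes x\to\id$, and then proves via a Yoneda-style computation that the evaluation $Dx\otimes x\to\un$ induces $\mathrm{can}$ under the duality identifications, from which the trace property follows. This verification is the other half of the theorem and is left as an unverified expectation in your proposal.
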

We will explain during the course of the construction what ``the canonical map'' means. For the proof, all that matters is that this map is natural in $(\C,x)$ and that the composite acts as desired on $\pi_0$. From there, the additivity theorem follows as previously explained. 
\begin{rmk}
One can see this map $\THH(\C)\to \Endo(\un)$ as a homotopy-coherent version of the trace - the homotopy coherence taking into account both the action of the automorphism group of $x$, and the cyclic invariance of the trace. 
\end{rmk}
\begin{rmk}\label{rmk : agree}
Suppose $\C = \Perf(R)$ for some commutative ring spectrum $R$. Then $\THH(\Perf(R))\simeq \THH(R) \simeq R\otimes_{R\otimes R}R$, and there is an obvious morphism of commutative ring spectra $R\otimes_{R\otimes R}R\to R$ given by the fact that the relative tensor product of commutative ring spectra is their pushout. We will not need it here, but these two maps agree (see remark \ref{rmk : agreemaps}).
\end{rmk}
The rest of this section is devoted to the proof of theorem \ref{thm : main}. 

The first step is to recall the point of view on $\THH$ that will be most helpful for us (see \cite[§4.5]{HSS}): 
\begin{defn}
Given a small stable category $\C$ and $T\in \Fun^L(\Ind(\C),\Ind(\C))$, $\THH(\C,T)$ denotes the trace of $T$ with respect to the Lurie monoidal structure on $\PrL_{st}$, where $\Ind(\C)$ is dualizable because it is compactly generated \cite[§ 3.3]{BGT}. 

Further, we let $\THH(\C) :=\THH(\C;\id_\C)$
\end{defn}
 Having a look at the proof from the first section, we note that we don't specifically need the map $\map(x,x)\to \THH(\C)$ to be the one induced by the inclusion in the cyclic bar construction, we really only need it to be natural in $(\C,x)$ - although the map we construct is most likely equivalent to that inclusion.

\begin{nota}
Suppose $\cat E,\cat D$ are presentable stable categories, $e\in\cat E,d\in\cat D$. We let $e\boxtimes d$ denote the image of $(e,d)$ under the canonical functor $\cat E\times \cat D\to\cat E\otimes\cat D$. 
\end{nota}
\begin{lm}
There is an equivalence $$\Map(x,x)\simeq \THH(\cat C, \Map(x,-)\boxtimes x)$$ where we identify $\Fun^L(\Ind(\C),\Sp) \otimes\Ind(\C)$  and $\Fun^L(\Ind(\C),\Ind(\C))$.
\end{lm}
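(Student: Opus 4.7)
The strategy is to identify the endofunctor $T := \Map(x,-) \boxtimes x$ of $\Ind(\C)$ as a ``rank-one'' tensor with respect to the self-duality of $\Ind(\C)$, and then invoke the general fact that the trace of such an endomorphism of a dualizable object is just the evaluation pairing applied to the corresponding tensor.

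First, I would recall the duality data for $\Ind(\C)$ in $\PrL_{st}$: its dual is $\Ind(\C\op)$, and the evaluation $\mathrm{ev}: \Ind(\C\op) \otimes \Ind(\C) \to \Sp$ is the colimit-preserving extension of the mapping spectrum pairing $\Map: \C\op \times \C \to \Sp$; equivalently, through the identification $\Ind(\C\op) \simeq \Fun^L(\Ind(\C), \Sp)$ via Yoneda, this pairing is simply functor application. The equivalence $\Fun^L(\Ind(\C), \Sp) \otimes \Ind(\C) \simeq \Fun^L(\Ind(\C), \Ind(\C))$ mentioned in the statement is then an instance of $M^\vee \otimes N \simeq \Fun^L(M, N)$ for dualizable $M$, and I would verify that under this identification $T$ corresponds to the pure tensor $x \boxtimes x$, where the first copy of $x$ lies in $\Ind(\C\op)$ (representing the functor $\Map(x,-)$ via Yoneda) and the second copy lies in $\Ind(\C)$.

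With this identification in hand, the claim reduces to the following elementary observation valid in any symmetric monoidal category: for a dualizable object $M$ with evaluation $\mathrm{ev}: M^\vee \otimes M \to \un$, an endomorphism corresponding to a pure tensor $\alpha \otimes m$ under $\Fun(M,M) \simeq M^\vee \otimes M$ has trace equal to $\mathrm{ev}(\alpha \otimes m) \in \un$. Applied to our situation, this yields $\THH(\C, T) \simeq \mathrm{ev}(x \boxtimes x) \simeq \Map(x, x)$. The main obstacle I anticipate is the bookkeeping needed to confirm that $T$ really does correspond to the pure tensor $x \boxtimes x$ (as opposed to some twisted variant involving a symmetry swap or a different convention for the composition law in $\Fun^L(\Ind(\C),\Ind(\C))$ under the duality identification); once this is pinned down, together with naturality in $x$ and in $\C$, the conclusion follows formally from the definition of the trace via coevaluation and evaluation.
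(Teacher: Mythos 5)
Your proof is correct and follows essentially the same line as the paper's one-sentence argument, which simply records that under the duality $\Ind(\C)^\vee \simeq \Fun^L(\Ind(\C),\Sp)$ the evaluation pairing is $F\boxtimes y \mapsto F(y)$; you have spelled out the two implicit ingredients (the rank-one trace identity and the identification of $\Map(x,-)\boxtimes x$ as a pure tensor), the latter of which the paper itself addresses in the Observation immediately following the lemma.
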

\begin{proof}
The evaluation map in the canonical duality data for $\Ind(\C)$ is precisely given by the evaluation $F\boxtimes y \mapsto F(y)$. 
\end{proof}
\begin{obs}
The equivalence $\Fun^L(\Ind(\C),\Sp)\otimes\Ind(\C)\to \Fun^L(\Ind(\C),\Ind(\C))$ is given by $F\boxtimes y \mapsto (z\mapsto F(z)\otimes y)$. 

In particular, using this identification, we find a natural map $\mathrm{can}: \Map(x,-)\boxtimes x\to \id$ given by the natural map $\Map(x,z)\otimes x\to z$ adjoint to the identity $\Map(x,z)\to \Map(x,z)$. 
\end{obs}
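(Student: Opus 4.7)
The plan is to verify the asserted form of the dualizability equivalence directly and then read off the map $\mathrm{can}$ from the standard $\Sp$-tensor/cotensor adjunction.

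First, I would spell out the identification. Since $\Ind(\C)$ is dualizable in $\PrL_{st}$ with dual $\Ind(\C)^\vee := \Fun^L(\Ind(\C),\Sp)$, the equivalence $\Ind(\C)^\vee\otimes D\xrightarrow{\sim}\Fun^L(\Ind(\C),D)$ comes from the universal property of duality and is natural in $D\in\PrL_{st}$. For $D=\Sp$ it is literally the identity of $\Ind(\C)^\vee$. Both functors $D\mapsto\Ind(\C)^\vee\otimes D$ and $D\mapsto\Fun^L(\Ind(\C),D)$ preserve colimits in $D$, so the equivalence is determined by its restriction to $\Sp$. Applying naturality to a morphism $\Sp\to D$ classifying a point $y\in D$, one reads off that $F\boxtimes y$ must correspond to the composite $z\mapsto F(z)\otimes y$, which is the asserted formula.

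Second, with this formula in hand, setting $F=\Map(x,-)$ and $y=x$ identifies $\Map(x,-)\boxtimes x$ with the endofunctor $z\mapsto\Map(x,z)\otimes x$ of $\Ind(\C)$. The $\Sp$-tensor $-\otimes x\colon\Sp\to\Ind(\C)$ is left adjoint to $\Map(x,-)$, so for each $z$ the identity of $\Map(x,z)$ corresponds under adjunction to a map $\Map(x,z)\otimes x\to z$. Naturality of this adjunction counit in $z$ assembles these pointwise maps into the desired natural transformation $\mathrm{can}\colon \Map(x,-)\boxtimes x\to\id$ of endofunctors of $\Ind(\C)$.

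The only slightly delicate point is that the identification of the equivalence on elementary tensors is \emph{forced} (and not merely consistent) by naturality. This is handled by the observation that elementary tensors generate $\Ind(\C)^\vee\otimes D$ under colimits and that the equivalence preserves these colimits separately in each variable, so the value on $F\boxtimes y$ determines and is determined by the two one-variable restrictions, each of which is pinned down by the $D=\Sp$ case.
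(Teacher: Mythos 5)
Your argument is correct, and it fills in details the paper omits: the statement is an \textbf{Observation} and receives no proof in the paper, so there is no proof to compare against. Your strategy --- identify the equivalence on elementary tensors via the naturality square attached to the $\PrL_{st}$-morphism $\Sp\to D$ classifying $y$, reduce to the $D=\Sp$ case where the equivalence is the unitor, then read off $\mathrm{can}$ as the counit of the adjunction $-\otimes x \dashv \Map(x,-)$ --- is exactly the standard way to make this precise, and it works.

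One small imprecision worth tightening: the sentence ``both functors preserve colimits in $D$, so the equivalence is determined by its restriction to $\Sp$'' is not quite the right logic and is not actually what your argument uses. What you really use is a single naturality square for the map $-\otimes y\colon\Sp\to D$: the left leg carries $F\boxtimes\Sph$ to $F\boxtimes y$, the top edge is the unitor, the right leg is postcomposition by $-\otimes y$ sending $F$ to $z\mapsto F(z)\otimes y$, and commutativity forces the claimed formula for the bottom edge on $F\boxtimes y$. That determines the equivalence on elementary tensors directly; the appeal to ``determined by restriction to $\Sp$'' (which would require more care, since $\Sp$ does not generate $\PrL_{st}$ under colimits in any naive sense) can simply be dropped. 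Your closing paragraph already hints at the right justification --- that elementary tensors generate $\Ind(\C)^\vee\otimes D$ under colimits and the equivalence preserves them --- and that is all that is needed to see the formula pins the equivalence down.
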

\begin{cons}
Identifying $\Map(x,x)$ as $\THH(\C, \Map(x,-)\boxtimes x)$, and using the natural map $\mathrm{can}: \Map(x,-)\boxtimes x\to \id_\C$, this gives us a natural definition of $$\iota: \Map(x,x)\to \THH(\C)$$ as the trace of $\mathrm{can}$, which is the one we will be using.
\end{cons}
\begin{rmk}
The above map \emph{is} natural in $(\C,x)$, but we do not even need a full homotopy-coherent naturality statement to make the argument work. Therefore we will leave the details of the construction to the reader, as they are not needed and only a homotopically naive naturality is required. 
\end{rmk}

Furthermore, note that the dual of $\Ind(\C)$ in $\PrL_{st}$ is $\Ind(\C\op)$, but duality induces an equivalence $\C\simeq \C\op$. The main point of the construction is to follow along this identification and see what the trace becomes. 

Indeed, the evaluation morphism $\Ind(\C)\otimes\Ind(\C\op)\to \Sp$ is given by extending by filtered colimits the mapping spectrum functor $\Map : \C\otimes\C\op \to\Sp$. 

Letting $D$ denote the duality functor on $\C$, the functor $(x,y)\mapsto \Map(y,x)$ is equivalent to $(x,y)\mapsto \Map(\un,x\otimes Dy)$, and so the trace map $\Ind(\C)\otimes\Ind(\C)\to\Sp$ is identified with the following composite : $$\Ind(\C)\otimes\Ind(\C)\overset{\mu}\to \Ind(\C)\overset{\Map(\un,-)}\to \Sp$$ where $\mu$ is the tensor product of the induced monoidal structure on $\Ind(\C)$. 

In fact, we have the following more precise claim: 
\begin{lm}
Let $\mu^*$ denote the right adjoint to $\mu$, and $\eta: \Sp\to \Ind(\C)$ the unit of the symmetric monoidal structure on $\Ind(\C)$. The following two maps define a duality data on $\Ind(\C)$: 
$$\Map(\un,-)\circ \mu : \Ind(\C)\otimes \Ind(\C)\to \Sp$$ and $$\mu^*\circ \eta : \Sp \to \Ind(\C)\otimes\Ind(\C)$$
\end{lm}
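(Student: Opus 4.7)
The plan is to verify the two triangle (snake) identities characterizing $(e,c) := (\Map(\un,-)\circ\mu,\ \mu^*\circ\eta)$ as a duality datum. Because $\mu$ is commutative, both $e$ and $c$ are invariant under the symmetry of $\otimes$, so the two triangle identities are equivalent and it suffices to check that the composite
\[ \Ind(\C) \xrightarrow{\ \id\,\otimes\,\mu^*\eta\ } \Ind(\C)^{\otimes 3} \xrightarrow{\ e\,\otimes\,\id\ } \Ind(\C) \]
is equivalent to $\id_{\Ind(\C)}$. As both functors are colimit-preserving, it is enough to verify this identity on compact generators $x\in\C$.

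The computation will rest on two auxiliary identities, each obtained by passing to right adjoints in a commutation of left adjoints in $\PrL_{st}$. First, the \emph{projection formula}: for every dualizable $x\in\C$, the Beck--Chevalley mate of the tautological commutation $(x\otimes -)\circ\mu \simeq \mu\circ((x\otimes -)\otimes\id)$ is an equivalence
\[ ((x\otimes -)\otimes\id)\circ\mu^* \simeq \mu^*\circ(x\otimes -); \]
invertibility of the mate hinges on the dualizability of $x$, which makes $(x\otimes -)\dashv(Dx\otimes -)$ an adjunction that also runs the other way. Second, passing to right adjoints in the algebra unit axiom $\mu\circ(\eta\otimes\id)\simeq\id$ yields
\[ (\Map(\un,-)\otimes\id)\circ\mu^* \simeq \id. \]

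With these in hand, the triangle identity follows from the chain of equivalences, for $x\in\C$:
\begin{align*}
(e\otimes\id)(x\otimes\mu^*\eta)
&\simeq (\Map(\un,-)\otimes\id)\circ(\mu\otimes\id)\,(x\otimes\mu^*\eta) \\
&\simeq (\Map(\un,-)\otimes\id)\bigl(((x\otimes -)\otimes\id)(\mu^*\eta)\bigr) \\
&\simeq (\Map(\un,-)\otimes\id)(\mu^*(x)) \\
&\simeq x,
\end{align*}
where the second equivalence uses the tautology $(\mu\otimes\id)\circ(x\otimes -) = ((x\otimes -)\otimes\id)$, the third applies the projection formula together with $x\otimes\eta \simeq x$, and the last applies the dualized unit axiom. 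The main obstacle will be establishing the projection formula rigorously: proving that the Beck--Chevalley natural transformation is invertible is standard but not automatic, and is precisely the point where the rigidity of $\C$ enters in an essential way. Once this is in place, the calculation above completes the verification that $(e,c)$ forms a duality datum on $\Ind(\C)$.
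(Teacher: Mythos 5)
Your plan verifies the triangle identity directly, whereas the paper outsources the entire lemma to \cite{HSSS}: Proposition 2.20 there certifies $\Ind(\C)$ as rigid when all objects of $\C$ are dualizable, and Proposition 2.17 says precisely that for a rigid presentable monoidal category the pair $\bigl(\Map(\un,-)\circ\mu,\ \mu^*\circ\eta\bigr)$ is a duality datum. The two routes therefore converge on the same non-formal input: your ``projection formula'' is essentially the definition of rigidity in \cite{HSSS}, and you are right to flag it as the one step that needs real work. Since you explicitly leave it as a claim, the argument as written is a plan with an acknowledged gap rather than a proof, but the gap is small and correctly located.

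To close it, it is cleaner not to argue invertibility of a Beck--Chevalley mate but to apply passage-to-right-adjoints to the \emph{entire} commuting square $(Dx\otimes -)\circ\mu\simeq\mu\circ\bigl((Dx\otimes -)\otimes\id\bigr)$. Because $-\otimes\Ind(\C)$ is a $2$-functor on $\PrL_{st}$ it preserves adjunctions, so the right adjoint of $(Dx\otimes -)\otimes\id$ is $(x\otimes -)\otimes\id$; and passing to right adjoints of a commuting square of left adjoints always yields a commuting square of right adjoints. This gives $\bigl((x\otimes -)\otimes\id\bigr)\circ\mu^*\simeq\mu^*\circ(x\otimes -)$ directly, with dualizability of $x$ doing exactly the work of supplying the two-sided adjoint and no separate invertibility check required. (If you insist on the mate formulation, you then confirm this equivalence coincides with the mate you describe, which is routine.) The ``dualized unit axiom'' $(\Map(\un,-)\otimes\id)\circ\mu^*\simeq\id$ is the same trick applied to $\mu\circ(\eta\otimes\id)\simeq\id$, as you note. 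One further small caveat: reducing to a single triangle identity via commutativity of $\mu$ is correct but requires a short coherence calculation with the braiding; since the two identities are symmetric you may as well simply check both.
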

\begin{proof}
By \cite[Proposition 2.20]{HSSS}, $\Ind(\C)$ is rigid, so the result is given by \cite[Proposition 2.17]{HSSS}. 

Alternatively, the identification above is a sketch of proof. 
\end{proof}
\begin{cons}
Letting $\tilde\id$ denote the image of $\id: \Ind(\C)\to \Ind(\C)$ under the identification $$\Fun^L(\Ind(\C),\Ind(\C))\simeq  \Ind(\C)\otimes\Ind(\C) $$ we find that to describe a morphism $\THH(\C)\to \Endo(\un)$, it suffices to describe a morphism $\mu(\tilde \id)\to \un$, to which we can then apply $\Map(\un,-)$. 

Describing such a morphism is equivalent to describing a morphism $\tilde \id \to \mu^*\un$. 

For this, note that the above lemma identifices $\mu^*\circ \eta : \Sp \to \Ind(\C)\otimes\Ind(\C)$ with the unique colimit preserving functor $\Sp\to \Fun^L(\Ind(\C),\Ind(\C))$ sending the sphere to $\id$, so this provides an identification $\tilde \id\simeq \mu^*(\un)$ (this identification can also be obtained by describing explicitly $\mu^*$ in terms of a restriction, as $\mu$ is left Kan extended from $\C\otimes\C$).

We therefore get a map $\mu(\tilde\id)\to \un$ adjoint to the above identification. 

This in turn induces a morphism $$\THH(\C)\to \Endo(\un)$$ 
\end{cons}
The claim is now that the composite $\Map(x,x)\to \Endo(\un)$ acts as desired, at least on $\pi_0$. 

\begin{rmk}\label{rmk : S1comm}
It is not too hard, if tedious, to prove that the map $\THH(\C)\to\Endo(\un)$ that we just described has another description, namely, we can view the map $\mu: \C\otimes\C\to \C$ as a map of commutative $\C$-algebras, and apply the $\C$-linear version of $\THH$ to it. 

For formal reasons this is well-defined and induces a mophism of $S^1$-equivariant commutative algebras in $\Ind(\C)$, $\eta (\THH(\C))\to \un $, and thus similarly a morphism of $S^1$-equivariant commutative ring spectra $\THH(\C)\to \Endo(\un)$, thus completely encoding the cyclic invariance of the trace. 

The proof that this is the same morphism as the one we described above is a tedious analysis of duality data, and of the functoriality of $\THH$. We do not need this statement in this paper so we do not adress this here. It can be done replacing $\PrL_{st}$ by any symmetric monoidal $(\infty,2)$-category. 
\end{rmk}
\begin{rmk}\label{rmk : agreemaps}
Consider the case where $\C = \Perf(R)$ for some commutative ring spectrum $R$. Then $\mu^*(\un)$ is the $R\otimes R$-module $R$, obtained by restricting scalars along $R\otimes R\to R$. This is indeed the $R$-bimodule corresponding to the identity, and $\mu$ is extension of scalars along the same map.

It follows that the co-unit $\mu\mu^*\un \to \un$ is the canonical map $R\otimes_{R\otimes R}R\to R$, and because $\Map(\un,-)$ is just the forgetful functor in this context, we find that this map $\THH(\C)\to \Endo(\un)$ coincides, in this special case, with the usual map $\THH(R)\to R$, as claimed in remark \ref{rmk : agree}.
\end{rmk}

Recall that our map $\iota: \Map(x,x)\to \THH(\C)$ is defined by applying $\THH(\C,-)$ to $$\mathrm{can}: \Map(x,-)\boxtimes x\to \id$$ 
Now these objects live in $\Fun^L(\Ind(\C),\Sp)\otimes\Ind(\C)\simeq \Fun^L(\Ind(\C),\Ind(\C))$.

\begin{lm}
Evaluation at $x$ and at $\id_x$ induces an equivalence $$\map_{\Fun^L(\Ind(\C),\Ind(\C))}(\Map(x,-)\otimes x, \id)\to \map(x,x)$$
\end{lm}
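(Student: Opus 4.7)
The plan is to identify the mapping spectrum $\Map_{\Fun^L(\Ind(\C),\Ind(\C))}(\Map(x,-)\otimes x,\id)$ with $\Map(x,x)$ via a pointwise tensor–hom adjunction followed by Yoneda; the desired equivalence of mapping spaces then follows by applying $\Omega^\infty$.

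Write $F := \Map(x,-)\otimes x$ as the composite $T\circ S$, where $S=\Map(x,-):\Ind(\C)\to\Sp$ and $T=(-)\otimes x:\Sp\to\Ind(\C)$ satisfy $T\dashv S$ via the $\Sp$-tensoring adjunction on $\Ind(\C)$. Since $x\in\C$ is compact in $\Ind(\C)$, both $S$ and $T$ preserve colimits, so $F\in\Fun^L(\Ind(\C),\Ind(\C))$. For any $G\in\Fun^L(\Ind(\C),\Ind(\C))$, the pointwise adjunction provides a natural-in-$z$ equivalence $\Map_{\Ind(\C)}(TS(z),G(z))\simeq \Map_\Sp(S(z),SG(z))$, which assembles to
$$\Map_{\Fun^L(\Ind(\C),\Ind(\C))}(F,G)\simeq \Map_{\Fun^L(\Ind(\C),\Sp)}(\Map(x,-),\Map(x,G(-))).$$

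Next, invoke Yoneda: via the universal property $\Fun^L(\Ind(\C),\Sp)\simeq \Fun^{ex}(\C,\Sp)$, the $\Sp$-enriched Yoneda lemma (valid since exact functors between stable $\infty$-categories are canonically $\Sp$-enriched) identifies the right-hand side with $\Map(x,G(x))$. Specializing to $G=\id_{\Ind(\C)}$ yields $\Map(F,\id)\simeq \Map(x,x)$. Tracing through the identifications, a natural transformation $\alpha:F\to\id$ is sent first to its pointwise adjoint $\tilde\alpha_z:\Map(x,z)\to\Map(x,z)$, and then by Yoneda to $\tilde\alpha_x(\id_x)\in\Map(x,x)$, which unwinds to the composite
$$x\simeq \Sph\otimes x\xrightarrow{\id_x\otimes\id_x}\Map(x,x)\otimes x\xrightarrow{\alpha_x}x.$$
This is precisely ``evaluation at $x$, applied to $\id_x$''.

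The only potentially subtle point is the $\Sp$-enriched Yoneda step for spectrum-valued functors on $\C$, but this is standard: exact functors between stable $\infty$-categories are automatically $\Sp$-enriched, and the enriched Yoneda lemma gives $\Map(\Map(x,-),H)\simeq H(x)$ for any exact $H:\C\to\Sp$.
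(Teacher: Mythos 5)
Your proof is correct and follows essentially the same route as the paper's: pass through the adjunction $(-\otimes x)\dashv\Map(x,-)$ to reduce to a mapping spectrum in $\Fun^L(\Ind(\C),\Sp)$, then apply the spectral Yoneda lemma. The paper states these two steps tersely; you fill in the pointwise-adjunction assembly and the explicit unwinding of where $\alpha\mapsto\alpha_x(\id_x)$ comes from, but the underlying argument is the same.
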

\begin{proof}
Note that $-\otimes x : \Sp\to \Ind(\C)$ is left adjoint to $\Map(x,-):\Ind(\C)\to \Sp$, so we find $$\map_{\Fun^L(\Ind(\C),\Ind(\C))}(\Map(x,-)\otimes x, -)\simeq \map_{\Fun^L(\Ind(\C),\Sp)}(\Map(x,-), \Map(x,-))$$

The spectral Yoneda lemma then implies that evaluation at $x$ and at $\id_x$ induces an equivalence of the latter with $\map(x,x)$.
\end{proof}

The natural map $\mathrm{can}: \Map(x,-)\boxtimes x \to \id$ corresponds to $\id_x$ under this equivalence, by definition. 

Now consider the evaluation map $Dx\otimes x\to \un$. It is of the form $\mu(Dx\boxtimes x)\to \un$, so it is adjoint to a certain map $Dx\boxtimes x\to \mu^*(\un)\simeq \tilde \id$; and under the identification $\Ind(\C)\otimes\Ind(\C)\simeq \Fun^L(\Ind(\C),\Ind(\C))$, this corresponds to a certain map $\Map(x,-)\boxtimes x\to \id$. 
\begin{prop}
The evaluation $Dx\otimes x\to \un$ induces $\mathrm{can}: \Map(x,-)\boxtimes x\to \id$
\end{prop}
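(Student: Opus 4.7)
The plan is to invoke the preceding lemma, which identifies
$$\map_{\Fun^L(\Ind(\C), \Ind(\C))}(\Map(x,-) \boxtimes x, \id) \simeq \map(x, x)$$
via the sequence: evaluate a natural transformation at $x$, pass to the adjoint under $- \otimes x \dashv \Map(x,-)$, then evaluate at $\id_x$. By construction $\mathrm{can}$ corresponds to $\id_x$, so it will suffice to show that the map $\phi: \Map(x,-) \boxtimes x \to \id$ induced by $\mathrm{ev}: Dx \otimes x \to \un$ corresponds to $\id_x$ as well.

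The main computation is then to produce an explicit formula for $\phi_z$ at each $z \in \Ind(\C)$. Using the dualizability of $x$ to write $\Map(x, z) \simeq \Map(\un, z \otimes Dx)$, I expect $\phi_z$ to be the composite
\begin{equation*}
\Map(x, z) \otimes x \simeq \Map(\un, z \otimes Dx) \otimes x \longrightarrow z \otimes Dx \otimes x \xrightarrow{\id_z \otimes \mathrm{ev}} z,
\end{equation*}
where the middle arrow is the canonical comparison of the spectral tensor with the internal tensor. Granting this, I specialize to $z = x$: the element $\id_x \in \Map(x, x)$ corresponds to the coevaluation $\mathrm{coev}: \un \to x \otimes Dx$ under $\Map(x, x) \simeq \Map(\un, x \otimes Dx)$, so its image along $\phi_x$ is the zigzag
$$x \xrightarrow{\mathrm{coev} \otimes \id_x} x \otimes Dx \otimes x \xrightarrow{\id_x \otimes \mathrm{ev}} x,$$
which equals $\id_x$ by the snake identity for the duality $(\mathrm{coev}, \mathrm{ev})$.

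The main obstacle is justifying the explicit formula for $\phi_z$. This is a bookkeeping task at the level of $\Ind(\C) \otimes \Ind(\C)$: one must compatibly unfold the adjunction $\mu \dashv \mu^*$, the identification $\mu^*(\un) \simeq \tilde{\id}$ (where $\tilde{\id}$ corresponds to $\id \in \Fun^L(\Ind(\C), \Ind(\C))$), and the self-duality of $\Ind(\C)$ provided by the preceding lemma, under which $Dx \in \Ind(\C)$ corresponds to $\Map(x, -) \in \Fun^L(\Ind(\C), \Sp)$ and the evaluation pairing is $\Map(\un, -) \circ \mu$. Once these identifications are aligned, the formula for $\phi_z$ is essentially forced by the adjunction and the snake identity concludes.
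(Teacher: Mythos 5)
Your overall strategy matches the paper's: both reduce, via the preceding lemma, to showing that the map $\phi$ induced by the evaluation $Dx\otimes x\to\un$ corresponds to $\id_x\in\pi_0\map(x,x)$, and both close by observing that a zigzag involving coevaluation and evaluation is the identity (you invoke the triangle identity explicitly; the paper phrases this as ``an easy check at a $\pi_0$-level''). That part is fine and the final computation with $\mathrm{coev}$ and $\mathrm{ev}$ is correct.

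The gap is the middle step. The entire content of this proposition is the verification that $\phi_z$ has the explicit form you write down, namely $\Map(x,z)\otimes x\simeq\Map(\un,z\otimes Dx)\otimes x\to z\otimes Dx\otimes x\to z$. You flag this yourself as ``the main obstacle'' and describe it as ``a bookkeeping task'' whose output is ``essentially forced,'' but you do not actually carry out the bookkeeping, and in a proposition of this type that bookkeeping \emph{is} the proof. The paper does carry it out, and it is instructive to see how: it does not stay at the level of internal tensor products in $\Ind(\C)$, but instead passes to the presheaf model $\Ind(\C)\simeq\Fun^{ex}(\C\op,\Sp)$, identifies $\mu$ as left Kan extension along $\otimes:\C\op\otimes\C\op\to\C\op$ and hence $\mu^*$ as restriction, and then writes the map $Dx\boxtimes x\to\mu^*(\un)$ adjoint to evaluation completely explicitly as the composite $\Map(y,Dx)\otimes\Map(z,x)\to\Map(y\otimes z,Dx\otimes x)\to\Map(y\otimes z,\un)\simeq\Map(z,Dy)$, from which evaluating at $Dy\simeq x$ and then at $\id_x$ produces the natural transformation that one then checks is the identity. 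If you want to complete your version without switching to the presheaf model, you still need to prove two things you currently assert: (i) that $\mu^*(\un)\simeq\tilde\id$ is compatible, under the self-duality of $\Ind(\C)$ sending $Dx\leftrightarrow\Map(x,-)$, with the identification of $\tilde\id$ as the functor $z\mapsto\Map(x,z)\otimes x$ in the $x$-slot; and (ii) that the adjunct of $\mathrm{ev}$ under $\mu\dashv\mu^*$ then unwinds to the composite you wrote. Neither is difficult, but neither is free, and they are exactly where the content of the proposition lives.
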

\begin{proof}
By the previous lemma, it suffices to evaluate the map $\Map(x,-)\boxtimes x\to \id$ induced by the evaluation at $x$ and at $\id_x$, and check that we get something equivalent to $\id_x : x\to x$.

To do that properly, one needs to understand $\mu$ as a restriction. Namely, $\mu$ is a functor $\Ind(\C)\otimes \Ind(\C) \to \Ind(\C)$, and by viewing $\Ind(\C)\simeq \Fun^{ex}(\C\op,\Sp)$, $\mu$ is given by left Kan extension along the tensor product of $\C$: $\C\op\otimes\C\op\to\C\op$, so that $\mu^*$ is given by restriction along the same map. 

In particular, in $\Fun^{ex}(\C\op\otimes\C\op,\Sp)$, the map $Dx\boxtimes x\to \mu^*(\un)$ adjoint to evaluation  is the composite $$\Map(y,Dx)\otimes \Map(z,x)\to \Map(y\otimes z, Dx\otimes x)\to \Map(y\otimes z, \un)\simeq \Map(z,Dy)$$

The comparison to $\Map(x,-)\boxtimes x$ uses the equivalence $\Map(y,Dx)\simeq \Map( x, Dy)$. 

So evaluating the above composite at $Dy \simeq x$ shows that the map is $$\Map(x,x)\otimes \Map(z,x)\to \Map(Dx,Dx)\otimes \Map(z,x)\to \Map(Dx\otimes z, Dx\otimes x) \to Map(Dx\otimes z, \un) \simeq \Map(z,x)$$
naturally in $z$. 

It now suffices to evaluate at $\id_x$ by precomposing with $\Map(z,x)\to \Map(x,x)\otimes \Map(z,x)$. 

By the spectral Yoneda lemma, it again suffices to evaluate at $z=x$ and at $\id_x$ to prove that this is the identity $\Map(-,x)\to \Map(-,x)$, i.e. the identity $x\to x$. But this is now an easy check at a $\pi_0$-level.  

\end{proof}

The following corollary is then a form of theorem \ref{thm : main}.  
\begin{cor}
The composite $\Map(x,x)\to \THH(\C)\to \Endo(\un)$ is given on $\pi_0$ by taking traces.
\end{cor}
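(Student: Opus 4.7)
The plan is to unwind both arrows in the composite $\Map(x,x) \to \THH(\C) \to \Endo(\un)$ using the rigid duality data $(\Map(\un,-)\circ\mu,\ \mu^*\circ\eta)$ from the preceding lemma, and to identify the composite with the map $\Map(\un,Dx\otimes x) \xrightarrow{\mathrm{ev}_*} \Map(\un,\un)$. Since the classical Dold--Puppe formula expresses $\tr(f)$ as post-composition of the name of $f$ with $\mathrm{ev}$, this gives the claim on $\pi_0$.

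Concretely, by construction of the second map, $\THH(\C) \simeq \Map(\un,\mu(\tilde\id)) = \Map(\un,\mu\mu^*\un)$, and $\THH(\C) \to \Endo(\un)$ is $\Map(\un,-)$ applied to the counit $\epsilon : \mu\mu^*\un \to \un$. Writing $T_x := \Map(x,-)\boxtimes x$ and $\tilde{T_x}$ for its image under the rigidity equivalence $\Fun^L(\Ind(\C),\Sp) \simeq \Ind(\C)$ (which sends $\Map(x,-)$ to $Dx$), we have $\tilde{T_x} \simeq Dx\boxtimes x$, so $\mu(\tilde{T_x}) \simeq Dx\otimes x$, and $\iota$ becomes $\Map(\un,-)$ applied to $\mu(\tilde{\mathrm{can}}) : Dx\otimes x \to \mu\mu^*\un$. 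Combining, the full composite $\Map(x,x) \to \Endo(\un)$ is obtained by applying $\Map(\un,-)$ to
$$Dx\otimes x \xrightarrow{\mu(\tilde{\mathrm{can}})} \mu\mu^*\un \xrightarrow{\epsilon} \un.$$

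The preceding proposition does the heavy lifting: it identifies $\tilde{\mathrm{can}} : Dx\boxtimes x \to \mu^*\un$ with the map adjoint to the evaluation $\mathrm{ev} : Dx\otimes x \to \un$ under $\mu\dashv\mu^*$. By the triangle identity the displayed composite therefore collapses to $\mathrm{ev}$, and the composite $\Map(x,x)\to\Endo(\un)$ becomes $\mathrm{ev}_*$ under the equivalence $\Map(x,x) \simeq \Map(\un, Dx\otimes x)$. On $\pi_0$ the latter equivalence sends $[f]$ to its name $f^\natural : \un\to Dx\otimes x$, whose post-composition with $\mathrm{ev}$ is by definition $\tr(f)$. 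The one delicate point I expect is checking that the equivalence $\Map(x,x)\simeq\Map(\un,Dx\otimes x)$ produced by unwinding $\THH$ really does agree with the standard naming equivalence; by the spectral Yoneda argument already used in the proof of the preceding proposition, this reduces to evaluation at the single element $\id_x$, which is a direct $\pi_0$-check.
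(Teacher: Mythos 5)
Your proof is correct and follows essentially the same route as the paper: unwind both maps to see that the composite is $\Map(\un,-)$ applied to $Dx\otimes x \to \mu\mu^*\un \to \un$, use the preceding proposition plus the triangle identity to collapse this to the evaluation $Dx\otimes x\to\un$, and then recognize the result as the trace on $\pi_0$. Your flag at the end—that the equivalence $\Map(x,x)\simeq\Map(\un,Dx\otimes x)$ arising from switching between the two duality data must agree with the standard naming equivalence—is a genuine subtlety that the paper's proof treats implicitly, and your proposed resolution via the spectral Yoneda lemma applied at $\id_x$ is exactly the right way to close it.
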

\begin{proof}
By the previous proposition, our map $\Map(x,x)\to \THH(\C)$ is exactly given by applying $\Map(\un, -)\circ \mu$ to the map $$Dx\boxtimes x\to \mu^*\un$$
adjoint to the evaluation $Dx\otimes x\to \un$. 

In other words, the composite $$Dx\otimes x= \mu(Dx\boxtimes x)\to \mu\mu^*\un \to \un$$ is exactly the evaluation map. 

Applying $\Map(\un,-)$ to the above composite yields the composite from the statement. But the above composite is the evaluation map, so applying $\Map(\un,-)$ to it yields the trace map on $\pi_0$, which is what we claimed. 
\end{proof}
We can now complete the proof of the main theorem: 
\begin{proof}[Proof of the main theorem \ref{thm : truemain}]
Let $\C$ be a stably symmetric monoidal $\infty$-category with unit $\un$, and let 
$$\xymatrix{X\ar[r] \ar[d]^f & Y \ar[r]\ar[d]^g & Z\ar[d]^h \\ 
X\ar[r] & Y\ar[r] & Z}$$ 
denote a co/fiber sequence of endomorphisms of dualizable objects.

The full subcategory $\C^{\mathrm{dbl}}$ of dualizable objects of $\C$ is a full stable subcategory, and so we may assume without loss of generality that all objects of $\C$ are rigid. 

We let $[f]$ denote the image of $f \in \Map(x,x)$ under the canonical map $\Map(x,x)\to \THH(\C)$. By theorem \ref{thm : main}, it suffices to prove that $[f] + [h] = [g]$ in $\pi_0\THH(\C)$.

This follows from what we explained in section \ref{section : strat} : $\THH$ is an additive invariant, and endomorphisms are represented by $\Perf(\Sph[t])$. More precisely, the co/fiber sequence of endomorphisms is represented by a co/fiber sequence of functors $\Perf(\Sph[t]) \to\C$, call them $F,G,H$, which, by additivity of $\THH$, induces an equivalence $\THH(F)+\THH(H) \simeq \THH(G)$. 

Evaluating this equivalence on the image of $t\in \Map(\Sph[t],\Sph[t])$ under the canonical map $\Map(\Sph[t],\Sph[t])\to \THH(\Perf(\Sph[t]))$ yields $[f]+[h]=[g]$, as desired. 

\end{proof}
\begin{rmk}
By \cite{KEnd}, there is a natural transformation of additive invariants from the $K$-theory of endomorphisms to $\THH$,  $\mathrm{KEnd}\to \THH$. 

There are also canonical maps $\map(x,x)\to \mathrm{KEnd}(\C)$, and $\mathrm{KEnd}$ is, essentially by definition, the universal place where the additivity theorem is satisfied - that is, where the images of $f,g,h$ satisfy $\overline g= \overline f+\overline h$. 

But this also means that defining a map $\mathrm{KEnd}(\C)\to \Endo(\un)$ without going through $\THH$ is ``philosophically'' unlikely: it would most likely require an argument such as the additivity theorem \emph{a priori}.

A contrario, the definition of $\THH$ in terms of a cyclic bar construction seems to suggest that all that is needed to define a trace map $\THH(\C)\to \Endo(\un)$ should be the cyclic invariance of the trace: no a priori knowledge of the additivity theorem should come in to that proof. This suggests that such a proof should be possible, and indeed it is. 
\end{rmk}
\begin{rmk}\label{rmk : noncom}
As pointed out in \cite[Remark 6.7]{HSS}, the morphism $\THH(\C)\to \Endo(\un)$ is not an instance of some general nonsense about additive invariants regarding $\THH$ : indeed it is only defined when $\C$ is \emph{rigid}, and in fact target is not even defined for general stable categories. 

Overall the proof could be separated in two parts: a noncommutative part consisting in describing a morphism $\mathrm{KEnd}\to \THH$ (which shows that the result holds ``in $\THH$''), and a commutative part, consisting in describing a morphism $\THH(\C)\to \Endo(\un)$.
\end{rmk}
\begin{rmk}
The case where $f,g,h$ are identities, so when one is computing ``Euler characteristics'', is already dealt with in \cite{HSS}. Going from $K$-theory to $\THH$, and constructing the morphism described above is what allows one to go from this special case to the full additivity theorem.
\end{rmk}
\begin{rmk}
The object $\mu(\tilde\id)= \mu\mu^*\un\in \Ind(\C)$ looks like an interesting object of $\Ind(\C)$, it is a $\C$-module version of $\THH(\C)$. Note that when $\C= \Perf(R)$ for some commutative ring spectrum $R$, it is simply $\THH(R)$ with its usual $R$-module structure. 

In particular, the $S^1$-action on $\THH(\C)$ generally does not lift to $\mu\mu^*\un$ (for instance, the $S^1$-action on $\THH(\mathbb F_p)$ is known not to be $\mathbb F_p$-linear). 
\end{rmk}
\section{A more general construction}\label{section : rig}
This is not useful to prove the additivity theorem, but the existence of a morphism from the Hochschild homology to the endomorphisms of the unit is interesting in its own right - we describe here a construction that works more generally. 

Suppose $\cat E\in\CAlg(\PrL_{st})$, and let $\PrL_\cat E$ denote its category of modules in $\PrL_{st}$. 

Suppose $\cat C\in \CAlg(\PrL_\cat E)$ is rigid (in the sense of \cite[Definition 2.15]{HSSS}). 

Then, by \cite[2.17]{HSSS}, it is dualizable in $\PrL_\cat E$ and its $\cat E$-linear trace is the following composite: $$\cat E\overset{\eta}\to \cat C\overset{\mu^*}\to \cat C\otimes_\cat E\cat C\overset{\mu}\to \cat C\overset{\eta^*}\to \cat E$$
where $\eta: \cat E\to \cat C$ is the unit, $\mu$ the multiplication, and $f^*$ is the ($\cat E$-linear) right adjoint of $f$. 

In particular, the ($\cat E$-linear) co-unit $\mu\mu^* \to \id_\cat C$ induces a map $\HH_\cat E(\cat C)\to \eta^*\eta\un_\cat E$, and the latter can be called $\Endo_\cat E(\un_\cat C)$. 

This is of course the same map as before when $\cat E= \Sp$, and when $\cat C= \Ind(\cat C^\omega)$, with $\cat C^\omega$ - only, this description is slightly less convenient to identify the composite $\map(x,x)\to \THH(\cat C)\to \Endo(\un)$. 

Even in the case where $\cat E= \Sp$, this is slightly more general than before, as it does not require $\cat C$ to be compactly generated - see \cite{KDual} for an account of additive or localizing invariants applied to dualizable presentable stable categories, following Efimov. See specifically Example 11 therein, dealing with $\THH$. 
\begin{rmk}
Remark \ref{rmk : S1comm} applies here as well. The rigidity of $\C$ guarantees that the multiplication map $\mu : \C\otimes_\cat E\C\to \C$ is an internal left adjoint in $\C$-modules, and one can thus apply $\HH_\C$ to it. This shows that $\HH_\cat E(\C)\to \eta^*\eta\un_\cat E$ can be viewed as a morphism of  $S^1$-equivariant commutative algebras in $\cat E$. 

The verification that this morphism agrees with the previous one is the same as in the previous remark, and is also not adressed here. 
\end{rmk}
\section{Traces of finite colimits}\label{section : mobius}
In this section we explain how to get a more general formula for traces of finite colimits, based on an argument of Berman \cite{berman}. 

For this, the ``commutative part'' of the argument remains unchanged, while we need to adapt the ``noncommutative'' part (see remark \ref{rmk : noncom}) slightly. 
\newcommand{\A}{\mathbf A}
\newcommand{\B}{\mathbf B}

The idea is the following : based on Berman's work, we find a formula expressing $E(\mathrm{colim}_I f(i))$ in terms of the $E(f(i))$'s, for $E$ an additive invariant and $f:I\to \Fun^{ex}(\A,\B)$ a functor from a finite category $I$, where $\A,\B$ are stable. Then, using the fact that $\THH$ is such an invariant, we conclude in the same way as before.
\begin{rmk}
Recall that in our convention,  ``finite category'' means  ``finite $\infty$-category''. Note that a $1$-category which is finite as a $1$-category need \emph{not} be finite as an $\infty$-category. 
\end{rmk}

The main result from \cite{berman} is the following (note \cite[Remark 10]{berman}): 
\begin{thm}\label{thm : mainberman}
Let $\C$ be a finitely cocomplete category and $A$ an abelian group. Suppose $\chi : \pi_0(\C^\simeq)\to A$ is a function satisfying the following : 
\begin{enumerate}
    \item If $0$ is the initial object of $\C$, then $\chi(0) = 0$
    \item If $\xymatrix{A\ar[r] \ar[d] & B\ar[d] \\ C\ar[r] & D}$ is a pushout square, then $\chi(D)+\chi(A) = \chi(C)+\chi(B)$
\end{enumerate}
Then for any functor $f: I\to \C$ from a finite category $I$, we have $$\chi(\mathrm{colim}_If(i)) = \sum_{i\in \pi_0(I^\simeq)} \mu_I(i)\chi(f(i))$$.
\end{thm}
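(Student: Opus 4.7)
The plan is to proceed by induction on a cell decomposition of the finite $\infty$-category $I$. By definition, every finite $\infty$-category is built from $\emptyset$ by a finite sequence of pushouts along the boundary inclusions $\partial \Delta^n \hookrightarrow \Delta^n$ ($n \geq 0$). The two key ingredients are: (a) colimits commute with pushouts of index categories, so a pushout decomposition $I \simeq I_1 \sqcup_J I_2$ yields a pushout in $\C$; and (b) axiom (2) asserts precisely that $\chi$ converts pushouts into the inclusion-exclusion relation $\chi(D) = \chi(B) + \chi(C) - \chi(A)$.

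The base cases are immediate. If $I = \emptyset$, then $\mathrm{colim}_I f$ is the initial object, so axiom (1) makes both sides vanish. If $I = \Delta^0$, then $\mathrm{colim}_I f = f(\ast)$ and the formula forces the normalization $\mu_{\Delta^0}(\ast) = 1$. For the inductive step, decompose $I \simeq I_1 \sqcup_J I_2$ with $J, I_1, I_2$ strictly simpler. Then
$$\mathrm{colim}_I f \;\simeq\; (\mathrm{colim}_{I_1} f|_{I_1}) \sqcup_{\mathrm{colim}_J f|_J} (\mathrm{colim}_{I_2} f|_{I_2}),$$
and axiom (2) yields
$$\chi(\mathrm{colim}_I f) \;=\; \chi(\mathrm{colim}_{I_1} f|_{I_1}) + \chi(\mathrm{colim}_{I_2} f|_{I_2}) - \chi(\mathrm{colim}_J f|_J).$$
Applying the induction hypothesis to the three right-hand terms expresses $\chi(\mathrm{colim}_I f)$ as an integer linear combination of the $\chi(f(i))$ for $i \in \pi_0(I^\simeq)$, and we \emph{declare} the coefficient of $\chi(f(i))$ to be $\mu_I(i)$.

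The main obstacle is well-definedness of $\mu_I$, which a priori could depend on the chosen cell decomposition of $I$. The cleanest resolution is a universal example argument: let $K$ be the free abelian group on $\pi_0(\mathrm{FinCat}^\simeq)$ modulo $[\emptyset] = 0$ and the pushout relation $[I_1 \sqcup_J I_2] + [J] = [I_1] + [I_2]$, with $\chi_{\mathrm{univ}}$ the tautological valid map. The inductive procedure then defines canonical coefficients $\mu_I \in \mathbb{Z}^{\pi_0(I^\simeq)}$ depending only on the equivalence class of $I$, and any other $\chi$ satisfying axioms (1)--(2) factors through $\chi_{\mathrm{univ}}$, so the same coefficients work uniformly. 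Equivalently, one may verify directly that Berman's intrinsic formula for $\mu_I$, read off from the cell structure or from over-categories of $I$, satisfies the required inclusion-exclusion recursion; this is the technical heart of \cite{berman} and the main work needed to make the above sketch fully rigorous.
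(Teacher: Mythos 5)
Note first that the paper does not prove this theorem: it is quoted directly from Berman \cite{berman} (see the sentence preceding the statement, and the reference to \cite[Theorem 3]{berman} in the subsequent remark for where $\mu_I$ is actually constructed). So there is no internal argument in the paper to compare your sketch against.

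Evaluating the sketch on its own terms: the inductive scheme---build $I$ by attaching cells $\partial\Delta^n\hookrightarrow\Delta^n$, note these are Joyal cofibrations so pushouts of simplicial presentations are pushouts in $\mathrm{Cat}_\infty$, observe that finite colimits decompose along such pushouts, and apply the inclusion--exclusion axiom---is the right shape and does match the spirit of Berman's argument. But you rightly flag that well-definedness of $\mu_I$ is the real content, and the ``universal example'' you offer does not in fact discharge it. Your group $K$ is a quotient of the free abelian group on $\pi_0(\mathrm{FinCat}^\simeq)$; this tells you that the identity for an arbitrary $(\C,\chi)$ would follow from the identity for the tautological $\chi$, but it does not, by itself, produce a \emph{canonical} coefficient vector in $\mathbb{Z}^{\pi_0(I^\simeq)}$ --- the quotient relations may identify classes, so one cannot read $\mu_I$ off uniquely from $K$. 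Even the sharper universal example, the free finitely cocomplete category on $I$ equipped with its tautological $K_0$-valued $\chi$, only pins down $\mu_I$ after one shows that group is free on $\pi_0(I^\simeq)$, which is itself nontrivial. The missing content is exactly what \cite[Theorem 3]{berman} supplies: an explicit formula for $\mu_I$ from a simplicial presentation of $I$, together with the verification that it satisfies the inclusion--exclusion recursion, is independent of the presentation, and descends to $\pi_0(I^\simeq)$. Your proposal correctly outlines the proof strategy but leaves its technical heart as a black box, so as written it is a plan rather than a proof.
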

\begin{rmk}
$\mu_I$ is a \emph{Möbius function} for $I$, \cite[Theorem 3]{berman} gives an explicit formula for it when $I$ is presented by a finite simplicial set. 
\end{rmk}
We apply this theorem for the map $\pi_0(\C^\simeq)\to K_0(\C)$ which obviously satisfies the conditions of the theorem when $\C$ is stable, so we get :
\begin{cor}
Let $\C$ be a stable category. Then, for any finite category $I$ and functor $f: I\to \C$, we have the following equality in $K_0(\C)$ (where $[x]$ denotes the class in $K_0(\C)$ of the object $x$): $$[\mathrm{colim}_If(i)] = \sum_{i\in \pi_0(I^\simeq)}\mu_I(i) [f(i)]$$
\end{cor}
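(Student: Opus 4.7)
The plan is to apply Theorem \ref{thm : mainberman} verbatim, with $A = K_0(\C)$ and $\chi$ the function $[-] : \pi_0(\C^\simeq) \to K_0(\C)$ sending an object $x$ to its class $[x]$. So all that has to be done is to verify the two hypotheses of Berman's theorem for this particular $\chi$.

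Hypothesis (1), that $\chi(0) = 0$, is immediate from the definition of $K_0(\C)$: the zero object fits in the trivial cofiber sequence $0 \to 0 \to 0$, which forces $[0] = 0$. For hypothesis (2), I would use the standard fact that in a stable $\infty$-category a pushout square is automatically also a pullback square, so that for a pushout square
$$\xymatrix{A\ar[r]\ar[d] & B\ar[d] \\ C\ar[r] & D}$$
the cofibers of the two horizontal arrows are canonically identified; call this common cofiber $Q$. Then we get two cofiber sequences $A\to B\to Q$ and $C\to D\to Q$ in $\C$, hence in $K_0(\C)$ the relations $[B] = [A]+[Q]$ and $[D] = [C]+[Q]$. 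Subtracting gives exactly $\chi(A)+\chi(D) = \chi(B)+\chi(C)$.

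With both hypotheses verified, the conclusion of Theorem \ref{thm : mainberman} applied to $f:I\to \C$ gives the claimed formula. I do not expect any serious obstacle here: the only nontrivial input is the bicartesianness of pushout squares in a stable category, which is standard, and the rest is formal application of the cited theorem.
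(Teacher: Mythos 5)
Your proposal is correct and is exactly the argument the paper has in mind: the paper simply asserts that $[-]\colon \pi_0(\C^\simeq)\to K_0(\C)$ ``obviously satisfies'' the two hypotheses of Theorem \ref{thm : mainberman} when $\C$ is stable, and your verification (that a pushout square in a stable category is bicartesian, so the two horizontal cofibers agree, yielding $[B]=[A]+[Q]$ and $[D]=[C]+[Q]$) is precisely the standard justification being elided.
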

\begin{cor}\label{cor : univcolim}
Suppose $\A$ is a compact stable category, and $\B$ an arbitrary small stable category, and let $f: I\to \Fun^{ex}(\A,\B)$ be a functor from a finite category $I$. 

Let $\mathcal U$ denote the universal additive invariant \cite{BGT}. Then we have $\mathcal U(\mathrm{colim}_if(i)) \simeq \sum_{i\in\pi_0(I^\simeq)}\mu_I(i)\mathcal U(f(i))$ as morphisms $\mathcal U(\A)\to \mathcal U(\B)$. 
\end{cor}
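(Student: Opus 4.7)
The plan is to apply Berman's theorem (Theorem~\ref{thm : mainberman}) to the stable category $\C = \Fun^{ex}(\A,\B)$, with abelian group $A = \pi_0 \Map(\mathcal U(\A),\mathcal U(\B))$ and function $\chi(F) = [\mathcal U(F)]$, where we regard $\mathcal U(F)$ as a morphism $\mathcal U(\A) \to \mathcal U(\B)$. The structure of the argument exactly parallels the deduction of the preceding corollary about $K_0$, with the only substantive change being the choice of target group.

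First, one checks that $\Fun^{ex}(\A,\B)$ is itself a stable (and in particular finitely cocomplete) category, with finite colimits computed pointwise: a pointwise colimit of exact functors into a stable target remains exact. Consequently, Berman's theorem applies to any functor $f : I \to \Fun^{ex}(\A,\B)$ from a finite category $I$ as soon as the two axioms on $\chi$ are established.

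The normalization $\chi(0) = 0$ is automatic, since the zero functor factors through the zero stable category and $\mathcal U$ sends that to the zero object of its target. For the pushout axiom, a pushout square of exact functors is pointwise a pushout, and is equivalently encoded by a cofiber sequence $F_0 \to F_1 \oplus F_2 \to F_3$ in $\Fun^{ex}(\A,\B)$. Since $\mathcal U$ is an additive invariant, applying it yields the identity
\[
\mathcal U(F_0) + \mathcal U(F_3) \;=\; \mathcal U(F_1 \oplus F_2) \;=\; \mathcal U(F_1) + \mathcal U(F_2)
\]
in $\pi_0 \Map(\mathcal U(\A),\mathcal U(\B))$, which is precisely Berman's hypothesis $\chi(A)+\chi(D) = \chi(B)+\chi(C)$.

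Invoking Theorem~\ref{thm : mainberman} then gives $[\mathcal U(\colim_i f(i))] = \sum_i \mu_I(i)\,[\mathcal U(f(i))]$, as desired. The main point requiring care is to ensure that the additivity of $\mathcal U$, ordinarily stated as a property of cofiber sequences of exact functors between fixed stable categories, really does yield an additive structure on the spectrum $\Map(\mathcal U(\A),\mathcal U(\B))$; this is where the compactness of $\A$ enters, via the formalism of \cite{BGT}, to guarantee that $\mathcal U$ assembles into a functor whose mapping spectra receive the classes $[\mathcal U(F)]$ and interact as expected with sums and cofiber sequences in $\Fun^{ex}(\A,\B)$.
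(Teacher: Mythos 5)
Your proof is correct, but it takes a genuinely different route from the paper. The paper first invokes the preceding corollary to obtain the identity in $K_0(\Fun^{ex}(\A,\B))$, and then applies the corepresentability theorem \cite[Theorem~7.13]{BGT}, namely $K_0(\Fun^{ex}(\A,\B)) \cong \pi_0\map(\mathcal U(\A),\mathcal U(\B))$, to transport the identity to the target group. You instead bypass $K_0$ entirely and apply Berman's theorem directly to the function $\chi(F) = [\mathcal U(F)] \in \pi_0\Map(\mathcal U(\A),\mathcal U(\B))$, verifying the two axioms by hand: $\chi(0)=0$ is trivial, and the pushout axiom is the standard consequence of additivity that a cofiber sequence $F_0\to F_1\oplus F_2\to F_3$ of exact functors yields $\mathcal U(F_1)+\mathcal U(F_2) = \mathcal U(F_0)+\mathcal U(F_3)$. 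Both arguments are valid; yours is arguably more self-contained since it avoids the corepresentability theorem, while the paper's is shorter and leans more heavily on cited results. One small correction to your closing paragraph: your argument does not in fact require compactness of $\A$. Additivity of $\mathcal U$ on cofiber sequences of exact functors holds for arbitrary small stable $\A$; it is the paper's route through $K_0$-corepresentability that genuinely uses compactness. So your appeal to compactness at the end is a red herring --- your proof actually establishes a slightly more general statement, and you could drop that caveat.
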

\begin{proof}
Applying the previous corollary to $\C = \Fun^{ex}(\A,\B)$, we get the equality in $K_0(\Fun^{ex}(\A,\B))\cong \pi_0\map(\mathcal U(\A),\mathcal U(\B))$, where the last isomorphism is $\pi_0$ applied to the equivalence in \cite[Theorem 7.13]{BGT}. 
\end{proof}
\begin{rmk}
In principle, the cited reference only applies to idempotent-complete stable categories. This issue is irrelevant as all the proofs and statements for connective $K$-theory work equally well in the non-idempotent complete setting. 

In any case, the statements we want to get in the end only depend on our stable category up to idempotent completion, so the reader who wishes to do so may add the adjective ``idempotent-complete'' to the above statement and proceed as though we idempotent complete everything on the way. 
\end{rmk}
\begin{rmk}
By universality of $\mathcal U$, the same property holds for all additive invariants.
\end{rmk}
Note that $\Perf(\Sph[t])$ is a compact stable category by the equivalence \ref{eq : repend}, and so we may apply this corollary to $\Perf(\Sph[t])$. We then get the desired result for traces: 
\begin{cor}
Let $\C$ be a stably symmetric monoidal category, with unit $\un$. 

Let $I$ be a finite category, and $\mu_I$ a Möbius function for it. Suppose $X : I^\triangleright\to \C$ is a colimit diagram taking values in dualizable objects, and let $f: X\to X$ be an endomorphism of $f$. 

Then $$\tr(f \mid \mathrm{colim}_I X_i) = \sum_{i\in \pi_0(I^\simeq)} \mu_I(i)\tr(f \mid X_i)$$
\end{cor}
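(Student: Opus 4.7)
The plan is to mirror the proof of Theorem \ref{thm : truemain}, replacing the co/fiber-sequence additivity of $\THH$ by the more flexible Corollary \ref{cor : univcolim}. As in that proof, since $\C^{\mathrm{dbl}}$ is a stable subcategory of $\C$ closed under the tensor product, I may assume that every object of $\C$ is dualizable, so that Theorem \ref{thm : main} furnishes a map $\THH(\C)\to \Endo(\un)$ realizing the trace on $\pi_0$ of $\map(x,x)\to\THH(\C)\to\Endo(\un)$.

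First I would lift the endomorphism $f$ of $X:I^\triangleright \to\C$ to a diagram $\widetilde X: I^\triangleright \to \Fun^{ex}(\Perf(\Sph[t]),\C)$ via the equivalence (\ref{eq : repend}). Since that equivalence is implemented by evaluation at $\Sph[t]\in\Perf(\Sph[t])$ together with the universal endomorphism, it preserves all colimits that exist; moreover, since $I$ is finite and $\C$ is stable, colimits of $\widetilde X$ restricted to $I$ are computed pointwise in $\Fun^{ex}(\Perf(\Sph[t]),\C)$. Consequently $\widetilde X$ remains a colimit diagram, and its value at the cone point classifies precisely the endomorphism $f\mid \colim_I X_i$ of the colimit.

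Next I would invoke Corollary \ref{cor : univcolim}, together with the remark that the statement holds for every additive invariant, applied to $\mathbf A=\Perf(\Sph[t])$ (compact thanks to (\ref{eq : repend})), $\mathbf B=\C$, and the functor $\widetilde X|_I : I\to \Fun^{ex}(\Perf(\Sph[t]),\C)$. This yields an equivalence
$$\THH(\widetilde X(\infty)) \simeq \sum_{i\in \pi_0(I^\simeq)} \mu_I(i)\,\THH(\widetilde X(i))$$
of maps $\THH(\Perf(\Sph[t]))\to \THH(\C)$, where $\infty$ denotes the cone point of $I^\triangleright$. Evaluating this equivalence on the image in $\pi_0\THH(\Perf(\Sph[t]))$ of $t\in\pi_0\map(\Sph[t],\Sph[t])$ under the canonical map, exactly as in the proof of Theorem \ref{thm : truemain}, produces the equality
$$[f\mid \colim_I X_i] \;=\; \sum_{i\in\pi_0(I^\simeq)}\mu_I(i)\,[f\mid X_i]$$
in $\pi_0\THH(\C)$, where $[-]$ denotes the image under $\map(x,x)\to \THH(\C)$.

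Finally, post-composing with the trace map $\THH(\C)\to\Endo(\un)$ of Theorem \ref{thm : main}, which realizes the trace on $\pi_0$, converts the above into the claimed identity in $\pi_0\Endo(\un)$. The step I expect to require the most care is the colimit-preservation assertion of the second paragraph, namely that an endomorphism of a finite colimit in $\C$ is classified by the colimit in $\Fun^{ex}(\Perf(\Sph[t]),\C)$ of the classifying diagram of endomorphisms. This is however essentially formal from the fact that (\ref{eq : repend}) is implemented by evaluation at the compact generator $\Sph[t]$, so it commutes with the finite colimits involved; granting this, everything reduces to the already-established additivity of $\THH$ through Corollary \ref{cor : univcolim}.
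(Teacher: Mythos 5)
Your proposal is correct and follows essentially the same route as the paper's proof: reduce to the rigid case, lift the endomorphism of the colimit diagram to a colimit diagram in $\Fun^{ex}(\Perf(\Sph[t]),\C)$ via the equivalence \eqref{eq : repend}, apply Corollary \ref{cor : univcolim} (transferred from $\mathcal U$ to $\THH$), evaluate at the canonical class of $t$, and post-compose with the map $\THH(\C)\to\Endo(\un)$ of Theorem \ref{thm : main}. The one place you spend more care than the paper --- checking that \eqref{eq : repend} carries colimit diagrams to colimit diagrams, which follows from finite colimits being computed pointwise in the functor category $\Fun(\Delta^1/\partial\Delta^1,\C)$ --- is indeed the only step the paper leaves implicit, and your justification is correct.
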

\begin{proof}
Up to corestricting $X$, we may assume $\C$ is small and every object therein is dualizable. 

In particular we get a morphism $\THH(\C)\to \Endo(\un)$ realizing the trace on $\pi_0$. 

Furthermore we can see $f$ as a colimit diagram $I^\triangleright \to \Fun^{ex}(\Perf(\Sph[t]),\C)$ and so, as morphisms $\mathcal U(\Perf(\Sph[t]))\to \mathcal U(\C)$ we have the equality from the previous corollary. $\THH$ being an additive invariant, we have the same equality as morphisms $\THH(\Perf(\Sph[t]))\to \THH(\C)$, which is enough to conclude as in the first section. 
\end{proof}
\begin{rmk}
There is an arguably easier proof, applying theorem \ref{thm : mainberman} to the trace function on $\Fun(\Delta^1/\partial \Delta^1,\C)$ with values in $A=\pi_0\Endo(\un)$. 

The end result is essentially the same, but we thought it worthwhile to formulate corollary \ref{cor : univcolim} even if a proof can be given without it. 
\end{rmk}
Specializing to $f= \id_X$ we get :
\begin{cor}
Let $\C$ be a stably symmetric monoidal category, with unit $\un$. 

Let $I$ be a finite category, and $\mu_I$ a Möbius function for it. Suppose $X : I^\triangleright\to \C$ is a colimit diagram taking values in dualizable objects. Let $\chi$ denote the Euler characteristic of dualizable objects. 

Then  $$\chi( \mathrm{colim}_I X_i) = \sum_{i\in \pi_0(I^\simeq)} \mu_I(i)\chi( X_i)$$
\end{cor}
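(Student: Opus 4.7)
The plan is simply to invoke the previous corollary with $f = \id_X$. Since $X \colon I^\triangleright \to \C$ is a colimit diagram with values in dualizable objects, the identity natural transformation $\id_X \colon X \Rightarrow X$ is an endomorphism of $X$ in the functor category, and so in particular an endomorphism of the colimit diagram as required by the previous corollary. Its restriction to each $i \in I$ is $\id_{X_i}$, and its value at the cone point is $\id_{\mathrm{colim}_I X_i}$.

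First I would note that by definition of the Euler characteristic as the trace of the identity, we have $\chi(X_i) = \tr(\id_{X_i} \mid X_i)$ for each $i$, and $\chi(\mathrm{colim}_I X_i) = \tr(\id_{\mathrm{colim}_I X_i} \mid \mathrm{colim}_I X_i)$. Then the previous corollary, applied to the endomorphism $\id_X$, yields
\[
\chi(\mathrm{colim}_I X_i) \;=\; \tr(\id_X \mid \mathrm{colim}_I X_i) \;=\; \sum_{i \in \pi_0(I^\simeq)} \mu_I(i)\,\tr(\id_X \mid X_i) \;=\; \sum_{i \in \pi_0(I^\simeq)} \mu_I(i)\,\chi(X_i),
\]
which is the desired formula.

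There is essentially no obstacle here; the only mildly delicate point is the assertion that $\id_X$ legitimately defines an endomorphism of the colimit diagram in the sense used by the preceding corollary (i.e.\ an object of $\Fun(\Delta^1/\partial\Delta^1, \Fun(I^\triangleright, \C))$ whose components at each vertex and at the cone are the identity), but this is just the identity of the functor $X$ viewed in the appropriate functor category and requires no further argument.
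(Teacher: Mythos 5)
Your proof is correct and matches the paper's: the corollary is obtained simply by specializing the preceding corollary to $f = \id_X$ and using that $\chi = \tr(\id)$ by definition. No further comment is needed.
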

\bibliographystyle{alpha}
\bibliography{Biblio.bib}

\end{document}